   \newcounter{prop}
 \newcounter{theo}
\newtheorem{lemma}{Lemma}
\newtheorem{example}{Example}
\newcommand{\nc}{\newcommand}
\nc{\vekk}[1]{}
\nc{\lik}{ = }  \nc{\supp}{\mbox{supp}} \nc{\E}{\mbox{E}}
\nc{\ttau}{\tilde{\tau}} \nc{\ttheta}{\tilde{\theta}}
\nc{\fra}{From} \nc{\st}{|}
\newcommand{\bX}{{\mathbf{X}}}
\newcommand{\bx}{{\mathbf{x}}}
\nc{\gof}{goodness-of-fit }
\nc{\Gof}{Goodness-of-fit }
\nc{\beq}{\begin{equation}}
\nc{\eeq}{\end{equation}}
\nc{\K}{K}
\nc{\beqns}{\begin{eqnarray*}}
\nc{\eeqns}{\end{eqnarray*}}
\nc{\beqn}{\begin{eqnarray}}
\nc{\eeqn}{\end{eqnarray}}
\nc{\hatt}{\hat{\theta}}
\nc{\beit}{\begin{itemize}}
\nc{\eit}{\end{itemize}}
\newenvironment{proof}{\paragraph{Proof:}}{\hfill$\square$}
\begin{document}

\def\spacingset#1{\renewcommand{\baselinestretch}%
	{#1}\small\normalsize} \spacingset{1}


{
	\title{\bf Conditional Goodness-of-Fit Tests for Discrete Distributions}
	\author{Rasmus Erlemann\footnote{Email: rasmus.erlemann@ntnu.no\ \ \ \ \ Address: Kilu 17, 13516, Tallinn, Estonia},\ Bo Henry Lindqvist\\
		\small{Department of Mathematical Sciences, NTNU}
	}
	\maketitle
}
\thispagestyle{empty}

\bigskip
\begin{abstract}
	In this paper, we address the problem of testing goodness-of-fit for discrete distributions, where we focus on the geometric distribution. We define new likelihood-based goodness-of-fit tests using the beta-geometric distribution and the type I discrete Weibull distribution as alternative distributions. The tests are compared in a simulation study, where also the classical goodness-of-fit tests are considered for comparison. Throughout the paper we consider conditional testing given a minimal sufficient statistic under the null hypothesis, which enables the calculation of exact $p$-values. For this purpose, a new method is developed for drawing conditional samples from the geometric distribution and the negative binomial distribution. We also explain briefly how the conditional approach can be modified for the binomial, negative binomial and Poisson distributions. It is finally noted that the simulation method may be extended to other discrete distributions having the same sufficient statistic, by using the Metropolis-Hastings algorithm.  
\end{abstract}

\noindent%
{\it Keywords:}  Goodness-of-fit; Conditional distribution; Geometric distribution; Monte Carlo simulation; Sufficient statistic; Beta-geometric distribution; Discrete Weibull distribution. 
\vfill

\newpage

\section{Introduction}
\spacingset{1.5}
\indent Let $X_1,X_2,\ldots ,X_n$ be a random sample from a distribution $F$. Goodness-of-fit testing is concerned with how well a family of distributions $\mathcal{F}$ fits the data as a probability model. The null hypothesis is $F\in \mathcal{F}$ and the alternative hypothesis is $F\notin \mathcal{F}$. In-depth literature on this topic includes \cite{agostino}. In the literature of goodness-of-fit testing, most of the work has been focused on continuous distributions, i.e. $\mathcal{F}$ a family of continuous distributions. For discrete distributions, the main interest has been in the Poisson distribution which plays a special role in probability theory. Goodness-of-fit tests for the Poisson distribution go at least back to \cite{fisher1950} and \cite{rao1956}. More recent studies of the Poisson distribution are \cite{LockhartPoisson} and \cite{RPoisson}. 

Common alternatives to the Poisson distribution are the negative binomial distribution and its special case, the geometric distribution. 
The latter distribution is of particular interest since it is the discrete counterpart of the exponential distribution, and is hence an important distribution with various applications, for example in survival analysis, reliability analysis and queuing theory. 
\cite{BCG} presented a comprehensive study of different goodness-of-fit test statistics for the geometric distribution and a comparison between them in a simulation study. Another paper considering tests for the geometric distribution is  \cite{ODG}. The present paper will mainly be concerned with \gof testing for the geometric distribution, although several ideas considered can easily be modified to cover other discrete distributions.
 
Traditional goodness-of-fit tests, both in the continuous and discrete distribution cases, are the Kolmogorov-Smirnov, Cram\'er-von Mises and Anderson-Darling tests, see for example \cite{agostino}. Various methods are used for finding critical values, typically based on standard asymptotic techniques or parametric bootstrapping. There are, however, also other methods or tricks available, often used to tailor goodness-of-fit testing for specific models.

One such ``trick'', which will be the main tool in the present paper, is to condition on sufficient statistics under the null hypothesis model to be tested. Such approaches go back to the 1950s. More specifically, \cite{fisher1950} obtained in this way exact versions of the chi-squared test and an alternative test based on the dispersion for the Poisson distribution, using the fact that the sum of the observations is a sufficient statistic in this case. As a follow-up, \cite{rao1956} used the same idea to derive an exact test for the Poisson case based on a likelihood ratio statistic (see Section~\ref{full}). Conditioning on sufficient statistics has also been used recently in \cite{reillypoisson} and  \cite{puig}. While the just cited papers have considered models with one unknown parameter under the null hypothesis, \cite{heller1986} did goodness-of-fit testing for the two-parameter negative binomial distribution, assuming both parameters are unknown. Then she conditioned on the sum of the observations in order to eliminate the probability parameter and then using an asymptotic approach having only one unknown parameter. 

Often, the sufficient statistic under the null model is easy to find, but still the calculation of critical values or $p$-values for the conditional tests can be problematic. Usually it will be necessary to sample from the conditional distributions given the sufficient statistic. For goodness-of-fit testing in continuous distributions, and in particular in models where there are more than one parameter, this may however not be straightforward. For possible approaches, see \cite{LT05}, \cite{CMCr}, \cite{lockhartgibbs}.

For the most common discrete distributions, like the binomial and the Poisson distribution, it is straightforward and well known how to do conditional sampling \citep{gonzalez}. How to perform conditional sampling for the geometric distribution and the negative binomial distribution is, apparently, less studied. \cite{gonzalez} derive the conditional distribution for this case, but does not advice a way of simulating from it. In Section~\ref{sec:condgeo} we show how this can be done by using the so called ``bars and stars'' framework of \cite{feller1}. It is believed that the associated algorithm is new in goodness-of-fit studies of the geometric distribution. An extension to the negative binomial distribution is given in the Appendix. Another way of obtaining conditional samples in discrete distributions is suggested by \cite{reillypoisson}, based on the so called Rao-Blackwell distribution. 

The most important ingredient of a goodness-of-fit test is of course the test statistic. The three standard tests, the Kolmogorov-Smirnov, the Cram\'er-von Mises and the Anderson-Darling test, are already mentioned. These are examples of tests based on the empirical distribution function of the data. While the Kolmogorov-Smirnov statistic considers the maximal difference between the null model and the empirical distribution of the data, the two other tests are based on the corresponding integrated squared difference.  A well known fact is that the Anderson-Darling statistic differs from the  Cram\'er-von Mises statistic in that it gives more weight to extreme values of the observations. There are in the literature also considered other test statistics that are known to be large  when the null hypothesis  model does not hold, but without connection to particular alternative models. Examples are chi-squared tests and tests based on Fisher's index of dispersion, which is the ratio of the variance to the mean,  and is well known to be 1 for the Poisson distribution. Closely related to these tests are the tests derived by \cite{kyria} based on characterizations of the Poisson, binomial and negative binomial distributions by their power-series representations (see Section~\ref{betageo}). 

The above tests are essentially not tailored for specific alternative distributions. There might in applications be of importance, however, to have tests that are particularly powerful for given alternative distributions. One purpose of the present paper is to investigate how well the standard \gof tests for the geometric distribution will do compared to tests tailored for specific alternatives. 
	
A classical problem is to test the Poisson distribution versus models for over-dispersion, such as the negative binomial distribution. The above cited paper by \cite{puig} gives another example. These authors considered testing of the Poisson distribution versus alternatives with log-convex probability generating functions, shown to have important applications in biodosimetry. 

 \cite{weinberg}) considered human fecundability data, using the geometric distribution to model the number of menstrual cycles required to achieve pregnancy. It is then reasonable to believe that the parameter $p$ of the geometric distribution varies between couples. The cited authors showed how to model this variation by means of beta distributions, which leads to counts following a so called beta-geometric distribution. Subsequently, \cite{paul} studied goodness-of-fit testing for the geometric distribution by testing versus the alternative being the beta-geometric distribution, using a score test and a likelihood ratio test. We return to this in Section \ref{betageo}. 
 
In reliability, a classical problem is to test the null hypothesis of an exponential distribution versus the alternative of a Weibull distribution. This may be done in a straightforward manner using a likelihood ratio test. In the discrete case, this would mean to test the geometric distribution versus some kind of discrete Weibull distribution. We will consider this problem in Section \ref{Weibulldisc}, using the so called type I discrete Weibull distribution, see for example \cite{BG}. 

As a final comment on the use of conditional testing, one might ask what is possibly lost in power by such an approach when compared to unconditional ones. We have not pursued this problem, but refer to  \cite{lockhartreilly}  who concluded from a particular study that calculated $p$-values from conditional tests are highly correlated with $p$-values found by parametric bootstrapping. An apparent advantage with the conditional tests is, moreover, that these tests are exact, while the bootstrap based tests are not exact (albeit almost so).

In the second section, we introduce how conditional tests are used in goodness-of-fit testing with discrete null hypothesis. We also cover how the $p$-values and powers are calculated with Monte Carlo methods in that setting. In the third section we present our method for drawing conditional simulations from the geometric distribution. It is based on the so-called stars and bars representation, introduced by \cite{feller1}. In the same section, we also introduce some classical test statistics and define new likelihood based tests. In the end of the section, there are two examples where the data is simulated from the beta-geometric and the discrete Weibull distribution of type I and we calculate the conditional $p$-values for both cases. The fourth section consists of the power study and simulated type I errors. In the fifth section we consider a real life data set and use previously mentioned methods to test if the geometric distribution fits the data. The sixth section consists of conclusions and a brief outline for possible future work in this subject. The last section is the appendix. There are proofs, algorithm descriptions, method for the negative binomial and parameterizations of the distributions we used. References are at the very end of this paper.

In the following we shall let $\mathbb{N}=\{1,2,\ldots\}$ and $\mathbb{N}_0=\{0,1,2,\ldots \}$. A sample of random variables $Y_1,Y_2,\ldots ,Y_n$ is denoted shortly in its vector form by a bold letter, $\mathbf{Y}=(Y_1,Y_2,\ldots ,Y_n)$. Bold $\mathbf{P}$ is reserved for the probability function to differentiate it from other functions. Notation for different distributions and the parametrizations are specified in the appendix. 

\section{Conditional Tests in Goodness-of-Fit Testing for Discrete Distributions}
\label{MC}

\subsection{Calculation of Conditional $p$-values by Monte Carlo Simulation}
\label{MCsub}

For illustration  we focus on \gof testing for the geometric distribution. Suppose $\bX=(X_1,\ldots,X_n)$ is a random sample from a population with values in $\mathbb{N}_0$.  The null and alternative hypotheses are as follows.
\begin{align*}H_0 &\ \colon \ \text{The random sample is from a population which has the geometric distribution,}\\
H_1&\ \colon \ \text{The random sample is not from a population which has the geometric distribution. }\end{align*}

Let $D=D(\bX)$ be a test statistic such that large values of $D$ are supposed to indicate deviations from $H_0$. Suppose that, under the null hypothesis, $T=T(\bX)$ is a sufficient statistic. Algorithm~\ref{condp} in the Appendix calculates, by Monte Carlo simulation,  the conditional $p$-value of the test from the formula
\[
   p^{\text{cond}} = \mathbf{P}(D(\bX) \ge D(\bx_{obs}) \;| \;T(\bX)=t)
\]
where $\bx_{obs}$ is the observed value of $\bX$ and $t$ is the observed value of $T(\mathbf{X})$. For the Monte Carlo simulation one therefore needs a way of simulating from the conditional distribution of $\bX$ given $T(\bX)=t$. In the next Section we show how this can be done in the case of the geometric distribution. 

\subsection{Calculation of Test Power of Conditional Tests}
To calculate the power of a \gof test for a given alternative distribution and for a given significance level $\alpha$, we proceed as follows. Draw a large number $M$ data sets from the alternative distribution. For the $i$-th such set, calculate the conditional $p$-value, $p_i^{\text{cond}}$ by Algorithm \ref{condp}, $i=1,2,\ldots ,M$. The Monte Carlo power of the test can then be calculated as
\[
\beta(\alpha) \approx 
\frac{\sum_{i=1}^M I(p_i^{\text{cond}} \le \alpha)}{M}.
\]

Power calculations require a large number of iterations. Let $K$ be the number of iterations used to calculate each conditional $p$-value. If $M$ is the number of data sets drawn to calculate the power, then in total we are doing $M$ times $K$ iterations. The number of data sets $M$ is chosen to be as large as possible depending on computational capabilities. We chose $M=1000$.

It should be noted that since we are dealing with discrete distributions, for a given data set $\bx$ with $T(\bx)=t$, there are only finitely many possible data sets in the conditional distribution of $\bX$ given $T(\bX)=t$. This means that, although we fix a significance level $\alpha$ and are guaranteed a size of the test that is $\le \alpha$, the size may be strictly less than $\alpha$. This problem is of course of less concern if $n$ is large. 
 
 \section{\Gof Testing in the Geometric Distribution}
 
\subsection{Conditional Sampling from the Geometric Distribution}
\label{sec:condgeo}
Let $X_1,X_2, \ldots ,X_n$ be iid random variables, such that  $X_i \sim \text{Geom}(p)$ for all $i=1,2,\ldots ,n$, i.e.,
\[
\mathbf{P}(X_i=x) = p(1-p)^{x} \; \mbox{ for } x=0,1,2,\ldots
\]
 Then $T(\mathbf{X})=\sum_{i=1}^nX_i=t$ is a sufficient statistic. The conditional distribution of $\bX$ given $T(\mathbf{X})=t$ is calculated as
\begin{align*}
\mathbf{P}(\mathbf{X}=\mathbf{x}\; |\; T(\mathbf{X})=t)&= \mathbf{P}(X_1=x_1, X_2=x_2, \ldots ,X_n=x_n\; | \; \sum_{i=1}^nX_i=t)\\
&=\frac{\mathbf{P}(X_1=x_1,X_2=x_2,\ldots ,X_n=x_n, \sum_{i=1}^nX_i=t)}{\mathbf{P}(\sum_{i=1}^nX_i=t)}\\
&=\left\{\begin{array}{lr}\displaystyle{\frac{\mathbf{P}(X_1=x_1,X_2=x_2,\ldots ,X_n=x_n)}{\mathbf{P}(\sum_{i=1}^nX_i=t)}}, & \text{ if } \sum_{i=1}^nX_i=t\\ 0, & \text{ if } \sum_{i=1}^nX_i\neq t\end{array}\right. .
\end{align*}
If we restrict the support to be  $S=\{ (x_1,x_2,\ldots ,x_n)\; \colon \; \sum_{i=1}^nx_i=t, \ x_1,x_2,\ldots ,x_n\in \mathbb{N}_0 \}$, we get
\begin{align}
\mathbf{P}(\mathbf{X}=\mathbf{x}\; |\; T(\mathbf{X})=t)&=\frac{p^n(1-p)^{\sum_{i=1}^nx_i}}{p^n(1-p)^t\binom{t+n-1}{n-1}}\nonumber\\
&=\frac{1}{\binom{t+n-1}{n-1}}.\label{condprob}
\end{align}
As the conditional probability \eqref{condprob} does not depend on $\mathbf{x} \in S$, the distribution of $\mathbf{X}\; | \; T(\mathbf{X})=t)$ is uniform on $S$, i.e., on the set of all possible ways that $n$ non-negative integers sum to $t$. \cite{feller1} introduced a representation of such sums through the so called ``stars and bars'' framework. To construct one such sum, we lay down $t$ stars and put $n-1$ bars between them.  The sum is constructed by counting the number of stars between the bars, letting the first element be the number of stars in front of the first bar, and letting the last one be the number to the right of the last bar. For example, for $t=8$ and $n=4$, Figure \ref{starsandbars} represents the sum $1+0+3+4=8$. 
\begin{figure}[H]
	\begin{center}
		\includegraphics[width=150mm]{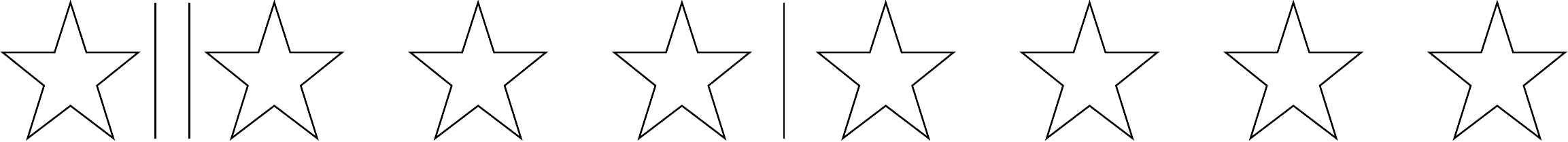}
	\end{center}
	\caption{Stars and Bars Representation}
	\label{starsandbars}
\end{figure}
  The following lemma states that the representation gives rise to any possible sum and conversely. The proof is given in Appendix. 
\begin{lemma}
\label{biject}
For $t,n\in \mathbb{N}$, let
	 	$$L_1=\left\{ (x_1,x_2,\ldots ,x_n)\; \colon \; \sum_{i=1}^nx_i=t, \ x_1,x_2,\ldots ,x_n\in \mathbb{N}_0 \right\}$$
	and $$L_2=\left\{ (k_1,k_2,\ldots ,k_{n-1}) \; \colon \; k_1 < k_2 < \ldots < k_{n-1} < t+n,\ k_1,\ldots ,k_{n-1}\in \mathbb{N}\right\}.$$	
		Define a transformation $\phi\colon L_2\to L_1$, such that
	$$\phi(k_1,k_2,\ldots ,k_{n-1})=$$
	$$=(k_1-1, (k_2-2)-(k_1-1),\ldots ,(k_{n-1}-(n-1))-(k_{n-2}-(n-2)), t-(k_{n-1}-(n-1))).$$
	Then $\phi$ is a bijection between the sets $L_2$ and $L_1$. 
	\end{lemma}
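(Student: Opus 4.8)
The plan is to write down an explicit candidate inverse $\psi \colon L_1 \to L_2$ and verify that $\psi$ and $\phi$ are mutually inverse, which is cleaner than checking injectivity and surjectivity separately. The key simplification is to put $\phi$ in telescoping form: given $(k_1,\ldots,k_{n-1}) \in L_2$, set $a_0 = 0$, $a_j = k_j - j$ for $1 \le j \le n-1$, and $a_n = t$; then the $j$-th coordinate of $\phi(k_1,\ldots,k_{n-1})$ is exactly $a_j - a_{j-1}$, for $j = 1,\ldots,n$. From this I would first check that $\phi$ really maps into $L_1$: the coordinates sum to $\sum_{j=1}^n (a_j - a_{j-1}) = a_n - a_0 = t$, and they are non-negative because $k_1 < k_2 < \cdots < k_{n-1}$ being strictly increasing integers in $\mathbb{N}$ with $k_{n-1} \le t+n-1$ gives $0 = a_0 \le a_1 \le \cdots \le a_{n-1} \le a_n = t$ (using $k_1 \ge 1$ for the first inequality and $k_{n-1} \le t+n-1$ for the last).

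Next I would define the inverse. Given $(x_1,\ldots,x_n) \in L_1$, put $s_0 = 0$, $s_j = x_1 + \cdots + x_j$ for $1 \le j \le n-1$, and set $\psi(x_1,\ldots,x_n) = (s_1+1,\, s_2+2,\, \ldots,\, s_{n-1}+(n-1))$. Since the $x_i \ge 0$ we have $s_j \le s_{j+1}$, hence $s_j + j < s_{j+1} + (j+1)$, so the entries are strictly increasing; and $s_j + j \ge 1$ while $s_{n-1}+(n-1) \le t + n - 1 < t+n$, so $\psi$ lands in $L_2$. Then I would verify the two compositions. For $\psi \circ \phi$: the partial sums of $\phi(k_1,\ldots,k_{n-1})$ are $s_j = a_j - a_0 = k_j - j$, so $\psi$ returns $s_j + j = k_j$. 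For $\phi \circ \psi$: here $a_j = (s_j + j) - j = s_j$, so the $j$-th coordinate of $\phi(\psi(x))$ is $a_j - a_{j-1} = s_j - s_{j-1} = x_j$ for $j < n$, and $t - a_{n-1} = t - s_{n-1} = x_n$ for $j = n$. Hence $\phi$ is a bijection with inverse $\psi$.

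The only real work here is the index bookkeeping needed to line up the downward shift $k_j \mapsto k_j - j$ built into $\phi$ with the upward shift $s_j \mapsto s_j + j$ built into $\psi$. Introducing the auxiliary sequences $(a_j)$ and $(s_j)$ makes both steps telescope, so no genuine obstacle remains; the edge case $n = 1$ (where $L_2$ consists of a single empty tuple and $\phi$ returns $(t)$) is covered automatically by the same formulas.
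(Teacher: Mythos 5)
Your proof is correct and takes essentially the same route as the paper: the inverse $\psi$ you construct (partial sums shifted by the index, $s_j+j$) is exactly the preimage the paper exhibits in its surjectivity step, and your check that $\psi\circ\phi=\mathrm{id}$ replaces the paper's coordinate-by-coordinate injectivity argument. The telescoping bookkeeping $a_j=k_j-j$ is a tidy repackaging, not a genuinely different method.
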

	
\begin{example}
The $k_j$ in $L_2$ are the positions of the bars in the stars and bars representation. For example, the stars and bars representation of $n=4$ and $t=1+0+3+4=8$ in Figure \ref{starsandbars},   correspond to	$$k_1=2, \ k_2=3, \ k_3=7.$$
	\end{example}

\begin{lemma}\label{many}
	Let $t,n\in \mathbb{N}$. The set
	$$\left\{ (x_1,x_2,\ldots ,x_n)\; \colon \; \sum_{i=1}^nx_i=t, \ x_1,x_2,\ldots ,x_n\in \mathbb{N}_0 \right\}$$
	has $\binom{t+n-1}{n-1}$ elements.
	\end{lemma}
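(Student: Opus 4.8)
The plan is to reduce the counting to Lemma~\ref{biject}. That lemma exhibits a bijection $\phi \colon L_2 \to L_1$, where $L_1$ is exactly the set whose cardinality is claimed to be $\binom{t+n-1}{n-1}$. Hence it suffices to compute $|L_2|$ and invoke the bijection to conclude $|L_1| = |L_2|$.

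First I would reinterpret $L_2$ combinatorially. A tuple $(k_1,k_2,\ldots,k_{n-1})$ with $k_1 < k_2 < \cdots < k_{n-1} < t+n$ and each $k_j \in \mathbb{N}$ is nothing other than an $(n-1)$-element subset of $\{1,2,\ldots,t+n-1\}$: the strict inequalities force the entries to be distinct, writing the elements of any such subset in increasing order gives the unique admissible arrangement, and the constraints $k_j \ge 1$ and $k_{n-1} \le t+n-1$ confine all entries to the set $\{1,\ldots,t+n-1\}$, which has $t+n-1$ elements. Therefore $L_2$ is in one-to-one correspondence with the $(n-1)$-subsets of a $(t+n-1)$-element set, so $|L_2| = \binom{t+n-1}{n-1}$ by the definition of the binomial coefficient.

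Combining the two steps, $|L_1| = |L_2| = \binom{t+n-1}{n-1}$, which is the assertion of the lemma.

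I expect no real obstacle here, since Lemma~\ref{biject} has already carried out the structural part of the argument. The only point needing a little care is the passage from the tuple description of $L_2$ to the subset description: one must check that the bound $k_{n-1} < t+n$ yields exactly the range $\{1,\ldots,t+n-1\}$ (so that the count is $\binom{t+n-1}{n-1}$ and not an off-by-one variant), and that $\mathbb{N}$ is taken to start at $1$, so that $k_1 \ge 1$. An alternative, self-contained route would be the direct ``stars and bars'' count --- arrange $t$ stars and $n-1$ bars in a line and choose the $n-1$ positions of the bars among the $t+n-1$ symbols --- but routing through Lemma~\ref{biject} is the cleanest given what has already been proved.
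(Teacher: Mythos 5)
Your proof is correct. It is worth noting that the paper does not actually supply its own argument for this lemma: it simply refers the reader to Feller for the classical stars-and-bars count, and remarks that the cardinality also follows from the computation in \eqref{condprob} (there the denominator $\mathbf{P}(\sum_i X_i = t) = p^n(1-p)^t\binom{t+n-1}{n-1}$ comes from the fact that a sum of $n$ i.i.d.\ geometric variables is negative binomial, so the uniform conditional probabilities summing to $1$ forces the set to have $\binom{t+n-1}{n-1}$ elements --- a probabilistic proof). Your route is the purely combinatorial completion of the stars-and-bars idea: identify $L_2$ with the $(n-1)$-element subsets of $\{1,\ldots,t+n-1\}$ (the strict inequalities and the bounds $k_1\ge 1$, $k_{n-1}\le t+n-1$ make this identification exact, as you checked), and transport the count to $L_1$ through the bijection $\phi$ of Lemma~\ref{biject}, which the paper has already proved. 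This makes the lemma self-contained within the paper rather than outsourced to a reference or to a distributional identity; the only cosmetic caveat is the degenerate case $n=1$, where $L_2$ consists of the empty tuple and the claim reduces to $\binom{t}{0}=1$, which is consistent with your argument.
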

The proof of Lemma \ref{many} can be found in \cite{feller1}. In fact, it also follows from \eqref{condprob}.

Algorithm RandomKSubsets from \cite{Herb} is a method for drawing the so-called bars $k_1,\ldots ,k_{n-1}$ uniformly from $L_2$. We have modified it by making recursive calls into iterative ones. This allows the algorithm to be used with large values of $n$ and $t$ more efficiently and there are no issues with recursion depth limitations. It is described by Algorithm \ref{norec} in Appendix, where a proof of its correctness is also given (Lemma~\ref{correct}).

Algorithm \ref{norec} gives us a sample $k_1,\ldots , k_{n-1}$ and the last step is to transform it back into an element from $L_1$. We use the previously defined function $\phi$ for it and
$$\phi(k_1,k_2,\ldots ,k_{n-1})\sim \mathbf{X}\; | \; T(\mathbf{X})=t.$$

\subsection{Standard \Gof Test Statistics for Discrete Distributions}
The following is a general setup for calculation of test statistics for the most common goodness-of-fit tests, with focus on the geometric distribution. The setup essentially follows the one of \cite{LockhartPoisson}
who in particular studied the performance for the Poisson distribution.

Let $x_1,x_2\ldots ,x_n$ be the observed sample and $t=\sum_{i=1}^nx_i$ the sufficient statistic. Maximum likelihood estimator for the geometric distribution is given by 
$$\hat{p}=\frac{n}{t+n}.$$

In order to avoid trivial cases, we will assume $t>0$ and hence $0<\hat{p}<1$. Now, define for $j=0,1,2,\ldots$,
\begin{eqnarray*}
	o_j &=& \# \{ i: x_i=j\} = \mbox{ observed number of values $j$ for the sample}\\
	\hat p_j &=& \hat p(1-\hat p)^j = \mbox{ probability of value $j$ in geometric distribution}\\
	\hat e_j &=& n \hat p_j= \mbox{ estimated expected number of values $j$ for the sample}
\end{eqnarray*}
From this define, for $k=0,1,2,\ldots$,
\begin{eqnarray*}
	\hat Z_k &=& \sum_{j=0}^k (o_j - \hat e_j) \equiv O_k - \hat E_k \\
	\hat H_k &=& \sum_{j=0}^k \hat p_j
\end{eqnarray*}
where $O_k = \sum_{j=0}^k o_j$ is the observed number of values $\le k$ in the sample and $\hat E_k = n \hat H_k$ is its estimated expected value. 

Further, define
\begin{eqnarray*}
	M_0^u &=& \min\{ j: o_{j'}=0 \mbox{ for all } j' > j\} \\
	M_1^u &=& \min\{ j: p_{j'} < 10^{-3}/n \mbox{ for all } j' > j\} \\
	M^u &=& \max\{ M_0,M_1\} \\
	M_0^l &=& \max\{ j: o_{j'}=0 \mbox{ for all } j' < j\} \\
	M_1^l &=& \max\{ j: p_{j'} < 10^{-3}/n \mbox{ for all } j' < j\} \\
	M^l &=& \min\{ M_0,M_1\}
\end{eqnarray*} 

\subsubsection{The Cram\'{e}r-von Mises Test}
The Cram\'{e}r-von Mises test statistic is defined by
$$W^2=\frac1n \sum_{M^{l}}^{M^{u}}\hat{Z}_i^2\hat{p}_i.$$

\subsubsection{The Anderson-Darling Test}
The Anderson-Darling test statistic is defined by
$$A^2 = \frac1n\sum_{M_l}^{M^u}\frac{\hat{Z}_i^2\hat{p}_i}{\hat{H}_i(1-\hat{H}_i)}.$$

\subsubsection{The Kolmogorov-Smirnov Test}
The Kolmogorov-Smirnov test statistic should ideally be defined as 
$\max_{k=1,2,\ldots}|Z_k|.$ 
As shown by \cite{BCG}, the maximum will always occur for a $k \le M_0^u$, so that we define
$$KS=\max_{k=0,1,2,\ldots,M_0^u}|Z_k|.$$
To see this, recall that $\hat Z_k = O_k - \hat E_k$. Now $O_k = n$ for $k \ge M_0^u$, while $\hat E_k < n$ and $\hat E_k$ is increasing in $k$. Hence, for $k \ge M_0$, $|Z_k|=Z_k$ and is decreasing.

\subsection{Likelihood Based Tests}
In the present subsection we study tests that are derived with the aim of having high power against given alternative distributions. The main tool is here to consider likelihood functions.

\subsubsection{Test Versus Heterogeneous Geometric Observations}
\label{full}
In this subsection we follow the idea of \cite{rao1956}, who considered the Poisson distribution where we consider the geometric distribution.

Suppose $X_1,X_2,\ldots,X_n$ are independent and geometrically distributed, but with different parameters $p_i$. The log likelihood for  data $x_1,x_2,\ldots,x_n$ would then be
\[
\ell(p_1,p_2,\ldots,p_n) = \sum_{i=1}^n (\log(p_i) + x_i \log (1-p_i)),
\]
which is maximized by $\hat p_i = 1/(1+x_i)$ for $i=1,2,\ldots,n$. 
The relevant null hypothesis is now
\[
H_0: p_1=p_2= \ldots = p_n = p.
\] 
The log likelihood under the null hypothesis is then $\ell(p,\ldots,p) = n \log(p) + t \log(1-p)$ where $t=\sum_{i=1}^n x_i$, which is maximized by $\hat p=n/(n+t)$.

The likelihood ratio statistic can therefore be written
\begin{eqnarray*}
	&&\ell(\hat p_1, \hat p_2,\ldots,\hat p_n)  - \ell(\hat p,\ldots,\hat p)\\
	&=& \sum_{i=1}^n \left[ \log\left(\frac{1}{1+x_i}\right) + x_i  \log\left(\frac{x_i}{1+x_i}\right) \right] -n \log\left(\frac{n}{n+t}\right) -t  \log\left(\frac{t}{n+t}\right) \\
	&=&  \sum_{i=1}^n \left[ x_i \log(x_i)-(x_i+1)\log(x_i+1) \right] -n \log\left(\frac{n}{n+t}\right) -t  \log\left(\frac{t}{n+t}\right)
\end{eqnarray*}
Since we consider conditional tests given $\sum_{i=1}^n X_i =t$, we may exclude the last terms above, which after rewriting the first sum gives the test statistic
\[
CR = \sum_{i=1}^n \left[ x_i \log(x_i)-(x_i+1)\log(x_i+1) \right] = \sum_{j=M_0^l}^{M_0^u} o_j\big(j \log j - (j+1)\log(j+1)\big)
\] 
where we use $0\log 0=0$.

\subsubsection{The Beta-Geometric Distribution} \label{betageo}
In the previous subsection we considered the alternative hypothesis that the observations were geometrically distributed, but with possibly different parameters $p_i$. Suppose now that these $p_i$ are drawn independently from the beta distribution. 

Thus, for a single observation $X$ we assume that it is geometrically distributed with parameter $p$, where $p$ is generated from the beta-distribution with parameters $\alpha>0$ and $\beta>0$. Let $B(\alpha,\beta)$ be the beta function, defined by
\[
B(\alpha,\beta) = \int_0^1  p^{\alpha-1}(1-p)^{\beta-1} dp = 
\frac{\Gamma(\alpha)\Gamma(\beta)}{\Gamma(\alpha+\beta)}.
\]
Then it is seen that the unconditional distribution of $X$ is what has been named the beta-geometric distribution,
\begin{equation}
\label{fab}
\mathbf{P}(X=x) = \int_0^1 p(1-p)^x  \
\frac{p^{\alpha-1}(1-p)^{\beta-1}}{B(\alpha,\beta)}dp
=\frac{B(\alpha+1,\beta+x)}{B(\alpha,\beta)}
\end{equation}
for $x=0,1,2,\ldots$. As suggested by \cite{paul}, a useful reparametrization is 
\beq
\label{repar}
\pi = \frac{\alpha}{\alpha+\beta}, \; \; \theta = \frac{1}{\alpha+\beta}.
\eeq
With this parametrization it is seen that $\theta=0$ corresponds to the geometric distribution with $p=\pi$. Tests for the null hypothesis of geometric distribution can hence be derived by testing
\[
H_0: \theta = 0 \mbox{ vs. } \theta > 0.
\]
Using the reparametrization (\ref{repar}), we find from (\ref{fab}), noting that $\alpha=\pi/\theta$, $\beta=(1-\pi)/\theta$ and using properties of the gamma function,
\begin{equation}
\mathbf{P}(X=x) =  \frac{\alpha \prod_{j=1}^x (\beta +x-j)}{\prod_{j=0}^x (\alpha + \beta +x-j)} =  \frac{\pi \prod_{j=0}^{x-1} (1-\pi +j \theta)}{\prod_{j=0}^x (1+j \theta)}
\end{equation}
for $x=0,1,\ldots$. (Note that the formula also holds for $x=0$, giving $\mathbf{P}(X=0)=\pi$,  since an empty product by convention equals 1.)

The log-likelihood for data $x_1,x_2,\ldots,x_n$ with values in $\mathbb{N}_0$ is hence, see also \cite{paul},
\begin{equation}
\label{loggl}
\ell(\bx;\pi,\theta) = n \log\pi + \sum_{i=1}^n \sum_{j=0}^{x_i-1} \log(1-\pi+j\theta) - \sum_{i=1}^n \sum_{j=0}^{x_i} \log(1+j\theta)
\end{equation}
Differentiating with respect to $\theta$ and $\pi$ give, respectively,
\begin{eqnarray*}
	\frac{\partial \ell}{\partial \theta}  &=&  \sum_{i=1}^n \sum_{j=0}^{x_i-1} \frac{j}{1-\pi+j\theta} - \sum_{i=1}^n \sum_{j=0}^{x_i} \frac{j}{1+j\theta}, \\
	\frac{\partial \ell}{\partial \pi} & =& \frac{n}{\pi} -  \sum_{i=1}^n \sum_{j=0}^{x_i-1} \frac{1}{1-\pi+j\theta}.
\end{eqnarray*}

Letting $\theta=0$ in the expression for $\frac{\partial \ell}{\partial \theta}$ gives the score statistic for testing $H_0: \theta=0$, namely
\beq
\label{SS}
S  =  \sum_{i=1}^n \sum_{j=0}^{x_i-1} \frac{j}{1-\pi} - \sum_{i=1}^n \sum_{j=0}^{x_i} j \; =\;  \frac{\pi \sum_{i=1}^n x_i^2 - (2-\pi)\sum_{i=1}^n x_i}{2(1-\pi)}.
\eeq
The score test in general rejects $H_0: \theta=0$ for large values of $|S|$. Indeed, it can be shown from the rightmost expression in (\ref{SS}) that, under $H_0$ where the $X_i$ are
geometrically  distributed with probability $p=\pi$,  we have $E(S)=0$.  
\cite{paul} replaced $\pi$ by the maximum likelihood estimate $\hat p$ under $H_0$, and divided the expression in (\ref{SS})  by an estimate of the standard deviation of $S$ under $H_0$, which is $\sqrt{n}/\hat p$. The resulting statistic then has an asymptotically  standard normal distribution under $H_0$. 

Let now $m_1=(1/n)\sum_{i=1}^n x_i$ and $m_2=(1/n)\sum_{i=1}^n x_i^2$ be the first and second empirical moments, respectively, from the data $\bx$.  
Replacing $\pi$ by $\hat p = n/(t+n)=1/(1+m_1)$ we can write the right hand side of (\ref{SS}) as 
\[
n \frac{ \frac{m_2}{1+m_1} - \left(2-\frac{1}{1+m_1}\right)m_1}
{2(1-\frac{1}{1+m_1})} 
= n \frac{m_2-m_1-2m_1^2}{2m_1}.
\]
Since we consider conditional testing given $t$, or equivalently  given $m_1$, we may use the test statistic. 
\beq
\label{scdef}
SB =m_2-m_1-2m_1^2.
\eeq
Actually, we could also have deleted the other terms involving $m_1$. We keep them, however, due to the fact that the sign of $SB$ is of some importance, as explained below. 

Note now that,  since $\theta \ge 0$ is a model restriction, the maximum of (\ref{loggl}) may occur at the boundary point where $\theta=0$. But for $\theta=0$, (\ref{loggl}) is simply the log likelihood of the geometric distribution and is hence maximized by $\pi=\hat p$. Thus if $SB>0$, then we know that the maximum of (\ref{loggl}) is not at a boundary point  with $\theta=0$, and must hence be at a point $(\pi,\theta)$ with $\theta>0$. This point may hence presumably be found by using the partial derivatives derived above. If, instead, $SB<0$, then the maximum likelihood estimate is likely to be at the point $(\hat p,0)$.

It follows from the above that if $SB<0$, then the numerical value is uninteresting, because it corresponds to parameter values outside of the parameter set and intuitively to parameters for which we would not reject the null hypothesis. We therefore suggest to replace $SB$ by $SB_0=\max(0,SB)$ and call this the score test statistic for the null hypothesis $\theta=0$. 

\cite{paul} considered the score test and in addition the likelihood ratio test based on the log likelihood (\ref{loggl}) and standard asymptotics (taking into account the fact that the null hypothesis is on the boundary of the parameter space). He further noted that the likelihood ratio test, as well as the score test, are rather liberal (non-conservative) as regards the size. He therefore found that a bootstrap test might be preferable.

\cite{singh} considered both maximum likelihood estimation and moment estimation of $\alpha$ and $\beta$. The moment estimators are obtained as follows. First, define
\begin{eqnarray*}
	\mu_1 \equiv E(X) &=& \frac{\beta}{\alpha - 1}\; \; \; \;  \mbox{ for } \alpha > 1, \\
	\mu_2 \equiv E(X^2) &=& \frac{\beta(\alpha+2\beta)}{(\alpha - 1)(\alpha-2)}\; \; \; \; \mbox{ for } \alpha > 2.
\end{eqnarray*}
Solving for $\alpha$ and $\beta$ we get
\begin{eqnarray*}
	\alpha &=& \frac{2(\mu_2-\mu_1^2)}{\mu_2-\mu_1-2\mu_1^2}, \\
	\beta &=& \mu_1(\alpha-1)
.
\end{eqnarray*}
The moment estimators $\tilde \alpha$ and $\tilde \beta$ for  $\alpha$ and $\beta$ are obtained by substituting the empirical moments $m_1$ and $m_2$ for $\mu_1$ and $\mu_2$, respectively. This leads to an estimator for the parameter $\theta$ which can be expressed by
\[
\tilde \theta =
(\tilde \alpha + \tilde \beta)^{-1} = 
\frac{m_2-m_1-2m_1^2}{2m_2-m_1^2+m_1m_2}.
\]

It is noticeable that the numerator of $\tilde \theta$ equals $SB$ (see (\ref{scdef})).  Thus $\tilde \theta$ and $SB$ have the same sign (since the denominator above is always positive). As already noted, this sign is of importance for maximum likelihood estimation based on (\ref{loggl}). Note, on the other hand, that in a conditional test using $\hat \theta$  we cannot ignore the denominator of $\tilde \theta$, since it contains $m_2$.

As a final note in this subsection, the test statistic $SB_0$ appears to be essentially identical to the one for the geometric distribution which is derived in  \cite{kyria}. These authors derived  test statistics from  characterizations of distributions given by power-series distribution laws, which include Poisson, binomial, and the negative binomial distribution. Their general goodness-of-fit test statistic is 
\begin{equation}
\label{cm2}
\hat c = \frac{\frac{1}{n} \sum_{i=1}^n X_i(X_i-1)}{\left( \frac{1}{n} \sum_{i=1}^n X_i \right)^2} = \frac{ m_2-m_1}{m_1^2}
\end{equation}
and their test for the geometric distribution rejects the null hypothesis if a normalized version of $$|\hat c - 2| = \frac{|m_2-m_1-2m_1^2|}{m_1^2} $$ is large, where the normalization leads to an asymptotically  standard normal distribution under $H_0$. Since the normalization is a function of $m_1$ only, and the denominator of (\ref{cm2}) can be deleted, using their statistic in a conditional testing we in fact end up with the test statistic $|SB|$.

 \subsubsection*{Example}
 Suppose we have observed the data in Table~\ref{tabex}, which are simulated from the beta-geometric distribution with $n=100, \pi=0.4, \theta=0.125$. 
 
 Using the described test statistics for testing the null hypothesis of a geometric distribution, we obtained the conditional $p$-values given in Table~\ref{pextab}. We note that also the three standard tests are able to detect the deviation from the geometric distribution here, while $p$-values are remarkably lower for the  tests derived above that are tailored for detecting deviations in the direction of a beta-geometric distribution. In fact, the same low $p$-values are obtained for tests versus the discrete Weibull distribution that will be studied below.  
 
 Maximum likelihood estimates for $\pi$ and $\theta$ in the beta-geometric distribution can be calculated using the R-package VGAM, giving   $\hat \pi=0.4274$, $\hat \theta = 0.1166$.   These estimates are used to calculate the estimated expected counts in Table~\ref{tabex}. It is remarkable that these are much closer to the observed values than the ones estimated from the geometric distribution. 
  \begin{table}[H]
      	\begin{center}
      		\begin{tabular}{| c | c | c | c |}
      			\hline
      			$j$ & $o_j$ & $\hat e_j^g$ & $\hat e_j^b$ \\ \hline
      			0 & 42 & 35.4 & 42.7 \\
      			1 & 24 & 22.8 & 21.9 \\
      			2 & 11 & 14.8 &  12.2\\
      			3 & 8  &  9.5 &  7.3 \\
      			4 & 4  &  6.2 & 4.6 \\
      			5 & 4  &   4.0  & 3.0\\
      			6 & 0 &   2.6  &  2.1 \\
      			7 & 1 &  1.7 &  1.4 \\
      			8 & 0  &  1.1  & 1.0 \\
      			9 & 2 &  0.7 &  0.8 \\
      			10 & 2 & 0.4  &  0.6 \\
      			11 & 0 & 0.3  &  0.4 \\
      			12 & 0 & 0.2 &  0.3\\
      			13 & 0 &  0.1  &  0.3 \\
      			14 & 0 & 0.1  &  0.2 \\
      			15 & 1 & 0.0  &  0.2 \\
      			16 & 1 & 0.0 &  0.1\\
      			\hline
      		\end{tabular} 
      	\end{center}
      \caption{Data simulated from the beta-geometric distribution with $n=100, \pi=0.4, \theta=0.125$. The column $o_j$ gives the number of observations $x_i$ that resulted in $x_i=j$. The two last columns give the estimated expected frequencies under a geometric distribution and beta-geometric distribution, respectively.}
      \label{tabex}
      \end{table}
      
      \begin{table}[H]
      	\begin{center}
      		\begin{tabular}{| c | c | c | c | c | c |c|c|c|c|c|}
      			\hline
      			Statistic & $W^2$ & $A^2$ & $KS$ & $CR$ & $SB$ 
      			& $SB_0$ & $\hat \theta$ & $|SW|$ & $SWL$ & $SWU$ \\
      			\hline
      			$p^{\text{cond}}$ & $0.034$ & $0.028$ & $0.059$ & $0.009$ & $0.004$ & 0.004 & 0.004 &0.005 &0.004 &0.996  \\
      			\hline
      		\end{tabular} 
      	\end{center}
      \caption{Conditional $p$-values obtained by simulating 10000 data sets from the conditional distribution}
      \label{pextab}
      \end{table}

\subsubsection{The discrete Weibull distribution of type I}\label{Weibulldisc}
Let for $x=0,1,2,\ldots$,
\begin{equation}
\label{typeI}
  \mathbf{P}(X=x) =  q^{x^\beta}-q^{(x+1)^\beta}
  \end{equation}
  where $0<q<1$ and $\beta>0$. 
 This is the probability distribution of the   type I Weibull distribution, which was introduced by \cite{naka}. We denote it by $\mathcal{W}(q,\beta)$. The geometric distribution with parameter $p$ is now a special case obtained when $q = 1-p$ and $\beta=1$. 
 
 The  R-package DiscreteWeibull contains routines for this distribution, including simulation of data and estimation of parameters. 
 
 The discrete hazard rate of a random variable with values in the (nonnegative) integers can be defined by \citep{barlow63} $\lambda(x) = \mathbf{P}(X=x\; |\; X \ge x)$. From (\ref{typeI}) we get 
 \[
   \lambda(x) = \frac{\mathbf{P}(X=x)}{\mathbf{P}(X \ge x)} = \frac{q^{x_i^\beta}-q^{(x_i+1)^\beta}}{q^{x_i^\beta}}= 1-q^{(x+1)^\beta-x^\beta}
 \]
 which is seen to be increasing in $x$ if $\beta>1$ and decreasing in $x$ if $\beta < 1$, and constant equal to $p$ when $\beta=1$, which corresponds to the geometric distribution. 
 
 Suppose now we have data $x_1,x_2,\ldots,x_n$ with values in $\mathbb{N}_0$. Testing the null hypothesis that the data come from the geometric distribution, is now equivalent to testing $H_0: \beta=1$ vs. $H_1: \beta \neq 1$, or possibly the one-sided versions of the alternative. The testing  can be done by a likelihood ratio test.  
 It follows  from (\ref{typeI}) that the log-likelihood for the sample $x_1,x_2,\ldots,x_n$ from the type I Weibull distribution is given by
 \[
    \ell (q,\beta) = \sum_{i=1}^n \ln\left( q^{x_i^\beta}-q^{(x_i+1)^\beta} \right).
 \]
 The likelihood ratio test statistic can be computed by calculating the maximum likelihood estimates of $q$ and $\beta$, and of $p$, which is the parameter under the null hypothesis model. Details are given by \cite{vila2019}, while computations can be done using the R-package DiscreteWeibull.
 
A score test can be derived in a way similar to what we did in Section~\ref{betageo} for the beta-geometric distribution.  First, the partial derivative with respect to $\beta$ of the log-likelihood function $\ell$,  is given by
\[
    \frac{\partial \ell}{\partial \beta} =\sum_{i=1}^n 
    \frac{q^{x_i^\beta}x_i^\beta \ln( q) \ln (x_i)
    - q^{(x_i+1)^\beta}(x_i+1)^\beta \ln( q) \ln (x_i+1)
    }{q^{x_i^\beta}-q^{(x_i+1)^\beta}}
 \]
 The score statistic of $H_0$ can then be found by letting $\beta=1$, which leads to
 \begin{equation}
 \label{scw}
 \frac{\partial \ell}{\partial \beta}|_{\beta=1}=  
   \frac{\ln(q)}{1-q} \sum_{i=1}^n \left(x_i \ln (x_i)
    - q(x_i+1) \ln (x_i+1) \right).
    \end{equation}
   It can now be checked that if the $x_i$ are from the geometric distribution with parameter $p$, the expected value of (\ref{scw}) is 0 (noting that $q=1-p$). The standard approach  is now to estimate $q$ by $1-\hat p$ (from the geometric distribution) and divide (\ref{scw}) by the estimated standard deviation, in order to obtain a test statistic which is standard normally distributed under the null hypothesis. We shall, however, consider conditional testing, conditioning on   $\sum_{i=1}^nX_i$ or, equivalently, on $\hat p$, and we may hence use the test statistic
   \[
   SW =  \sum_{i=1}^n \left[ (1-\hat p)(x_i+1) \ln (x_i+1) - x_i \ln (x_i)
     \right] .
   \]
   where $\hat p = n/(n+\sum_ix_i)$.
 If  $x_i=0$, we shall let $x_i \ln(x_i) = 0$. Note that we have changed the order of the terms inside the sum as compared to (\ref{scw}). This is because $\ln(q) < 0$ and will lead to a statistic $SW$ with the same sign as $\frac{\partial \ell}{\partial \beta}|_{\beta=1}$. Then for the two-sided alternative, $\beta \neq 1$, we should use the statistic $|SW|$ as the test statistic. A more powerful test can then be defined for the two one-sided alternatives, by using  $SWU = SW$ if the alternative is $\beta>1$ and $SWL=-SW$ for the alternative $\beta <1$, and reject in both cases for high values of the test statistic. 
 
 The resemblance between the statistics $SW$ and $CR$ is striking. In fact, $CR$ is obtained from $SW$ by letting $\hat p=0$ and switching the sign.  Simulations and $p$-value calculations in the following will indicate the possible difference between their merits. 

\subsubsection*{Example}
 Suppose we have observed the data in Table~\ref{tabex2}, which are simulated from a type I discrete Weibull distribution with $n=50, q=0.8, \beta=1.4$ using the R-package DiscreteWeibull.
 
 Using all the tests considered so far in the paper, we obtained the conditional $p$-values given in Table~\ref{pextab2}. We note that also the three standard tests are able to detect the deviation from the discrete Weibull distribution here, while $p$-values are remarkably lower for the tests $|SW|$ and $SWL$, which are tailored for detecting deviations in the direction of the discrete Weibull distribution.  It should be noted, however, that the test based on   $CR$ as well as the tests versus the beta-geometric distribution are useless for these data. The reason for this last fact is that the beta-geometric distribution always increases the variance of the data as compared to the geometric distribution, while the discrete Weibull with $\beta > 1$ decreases the variance (a property well known for the continuous Weibull distribution). Thus a beta-geometric distribution would have difficulties fitting these data.  
 
 Maximum likelihood estimates for $\pi$ and $\theta$ are calculated as $\hat q = 0.7239$, $\hat \beta = 1.267$ using the R-package DiscreteWeibull.   These estimates are used to calculate the estimated expected counts in Table~\ref{tabex2}. Again, these are much closer to the observed values than the ones estimated from the geometric distribution.  
  \begin{table}[H]
      	\begin{center}
      		\begin{tabular}{| c | c | c | c |}
      			\hline
      			$j$ & $o_j$ & $\hat e_j^g$ & $\hat e_j^b$ \\ \hline
      			0 & 13 & 18.0 & 13.8 \\
      			1 & 14 & 11.5 & 13.2 \\
      			2 & 10 & 7.4 &  9.3\\
      			3 & 8  &  4.7 &  5.9 \\
      			4 & 1  &  3.0 & 3.5 \\
      			5 & 1  &   1.9  & 2.0\\
      			6 & 0 &   1.2  &  1.1 \\
      			7 & 2 &  0.8 &  0.6 \\
      			8 & 1  &  0.5  & 0.3 \\
      			\hline
      		\end{tabular} 
      	\end{center}
      \caption{Data simulated from a type I Weibull distribution with $n=50, q=0.8, \beta=1.4$. The column $o_j$ gives the number of observations $x_i$ that resulted in $x_i=j$. The two last columns give the estimated expected frequencies under a geometric distribution and type I Weibull distribution, respectively.}
      \label{tabex2}
      \end{table}
      
      \begin{table}[H]
      	\begin{center}
      			\begin{tabular}{| c | c | c | c | c | c |c|c|c|c|c|}
      			\hline
      			Statistic & $W^2$ & $A^2$ & $KS$ & $CR$ & $SB$ 
      			& $SB_0$ & $\hat \theta$ & $|SW|$ & $SWL$ & $SWU$ \\
      			\hline
      			$p^{\text{cond}}$ & $0.072$ & $0.078$ & $0.124$ & $0.962$ & $0.890$ & 1.0 & 0.890 &0.083 &  0.956 & 0.044 \\
      			\hline
      		\end{tabular} 
      	\end{center}
      \caption{Conditional $p$-values obtained by simulating 10000 data sets from the conditional distribution. Two-sided (one-sided) testing using $|SW|$ ($SWU$)  means that the alternative hypothesis is $\beta \neq 1$ ($\beta > 1$). The statistics $SB$ and $SB_0$  can only be used to detect an increased variance compared to the geometric distribution. Here the test statistic $SB$ is negative, which makes these tests meaningless. }
      \label{pextab2}
      \end{table}

\section{Computer Simulations}
\subsection{Power Study}
We did a power study with various sample sizes, alternative distributions and all previously defined test statistics with significance level $\alpha=0.1$. In some cases we disregarded sample sizes $n=5$ or $n=100$ if the powers were too close to the significance level or $1$. We used $1000$ iterations to calculate each conditional $p$-value and another $1000$ iterations to calculate the power. These numbers were chosen to make sure each power calculation takes less than half an hour of computation time.
\begin{table}[H]
\begin{center}
	\resizebox{\columnwidth}{!}{%
	\begin{tabular}{| c | c | c | c | c | c | c | c | c | c |}
		\hline 
		Alternative & Sample size & $W^2$ & $A^2$ & $KS$ & $CR$ & $SB_0$  & $|SW|$ & $SWL$ & $SWU$\\
		\hline
		\multirow{2}{*}{$\text{Pois}(0.5)$}
		 & $n=25$ & $0.231$ & $0.238$ & $0.208$ & $0.002$ & $0.001$ & $0.225$ & $0.002$ & $0.329$ \\
		 & $n=100$ & $0.736$ &  $0.734$ & $0.705$ & $0.000$ & $0.000$ & $0.763$ & $0.000$ & $0.851$ \\
		\hline
		\multirow{3}{*}{$\text{Pois}(1)$} & $n=5$ & $0.119$ & $0.119$ & $0.110$ & $0.008$ &  $0.008$ & $0.122$ & $0.008$ & $0.134$ \\
		 & $n=25$ & $0.613$ & $0.605$ & $0.543$ & $0.000$ & $0.001$ & $0.618$ & $0.000$ & $0.730$ \\
		 & $n=100$ & $0.996$ & $0.996$ & $0.992$ & $0.000$ & $0.000$ & $0.998$ & $0.000$ & $0.999$ \\
		\hline
		\multirow{2}{*}{$\text{Pois}(2)$}
		 & $n=5$ & $0.332$ & $0.321$ & $0.163$ & $0.003$ & $0.003$ & $0.339$ & $0.003$ & $0.395$ \\
		 & $n=25$  & $0.963$ & $0.965$ & $0.914$ & $0.000$ & $0.000$ & $0.966$ & $0.000$ & $0.985$\\
		\hline
		\multirow{2}{*}{$\text{Bin}(5,0.3)$} & $n=5$ & $0.418$ & $0.410$ & $0.294$ & $0.000$ & $0.000$ & $0.403$ & $0.000$ & $0.432$ \\
		 & $n=25$ & $0.986$ & $0.985$ & $0.972$ & $0.000$ & $0.000$ & $0.990$ & $0.000$ & $0.996$ \\
		\hline
		\multirow{2}{*}{$\text{NB}(5, 0.5)$} & $n=5$ & $0.531$ & $0.466$ & $0.434$ & $0.000$ &  $0.001$ & $0.538$ & $0.000$ & $0.672$ \\
		 & $n=25$ & $0.997$ & $0.998$ & $0.986$ & $0.000$ & $0.000$ & $1.000$ & $0.000$ & $1.000$ \\
		\hline
		\multirow{2}{*}{$\text{NB}(3, 0.7)$} & $n=25$ & $0.395$ & $0.393$ & $0.339$ & $0.001$ &  $0.002$ & $0.407$ & $0.001$ & $0.526$ \\
		 & $n=100$ & $0.875$ & $0.875$ & $0.834$ & $0.000$ & $0.000$ & $0.906$ & $0.000$ & $0.945$ \\
		\hline
		\multirow{2}{*}{$\mathrm{BG}(2, 5)$} & $n=5$ & $0.161$ & $0.170$ & $0.137$ & $0.267$ & $0.269$ & $0.158$ & $0.274$ & $0.025$ \\
		& $n=100$ & $0.565$ & $0.565$ & $0.514$ & $0.676$ & $0.705$ & $0.608$ & $0.706$ & $0.007$ \\
		\hline
		\multirow{2}{*}{$\mathrm{BG}(2, 2)$} & $n=5$ & $0.122$ & $0.132$ & $0.108$ & $0.205$ &  $0.201$ & $0.125$ & $0.207$ & $0.018$ \\
		& $n=25$ & $0.558$ & $0.570$ & $0.504$ & $0.705$ & $0.688$ & $0.611$ & $0.717$ & $0.001$ \\
		\hline
		\multirow{2}{*}{$\mathcal{W}(0.7, 0.8)$} & $n=25$ & $0.320$ & $0.338$ & $0.282$ & $0.503$ & $0.432$ & $0.353$ & $0.492$ & $0.006$ \\
		& $n=100$ & $0.749$ & $0.759$ & $0.679$ & $0.874$ & $0.792$ & $0.792$ & $0.882$ & $0.000$ \\
		\hline
		\multirow{2}{*}{$\mathcal{W}(0.5, 1.5)$} & $n=25$ & $0.428$ & $0.431$ & $0.385$ & $0.000$ & $0.001$ & $0.438$ & $0.000$ & $0.567$ \\
		& $n=100$ & $0.948$ & $0.943$ & $0.937$ & $0.000$ & $0.000$ & $0.960$ & $0.000$ & $0.984$ \\
		\hline
		\end{tabular}
	}
\end{center}
\caption{Conditional power calculations with significance level $\alpha=0.1$.}
\end{table}
As usual for a wide choice of alternative distributions, there is no best test against all alternatives. From standard tests, $W^2$ has slightly higher powers with small sample sizes. For larger sample sizes, $A^2$ and $W^2$ are almost identical. Maximal type test $KS$ has slightly lower powers than the other standard tests. Tests $CR$, $SB$, $SB_0$, $\hat\theta$, $SWL$ and $SWU$ are sensitive to the alternative distribution the data comes from. This makes them situational and they lack the versatility of the standard tests. For example, $CR$, $SB$, $SB_0$, $\hat\theta$ and $SWL$ outperform the standard tests when the data comes from $\mathrm{BG}$ or $\mathcal{W}$ distributions. $SWU$ outperforms other tests for $\text{Pois}$, $\text{Bin}$ and $\text{NB}$ distributions. $|SW|$ is more versatile and has almost identical powers to the standard quadratic tests. Likelihood based tests need a versatile comparative alternative distribution to perform well. Type I Weibull distribution fits this role, as we can see from $|SW|$ powers. Test $SWL$ is for the case where $\beta<1$ and $SWU$ for $\beta>1$. Under those conditions, they outperform $|SW|$.

\subsection{Type I Errors}
The type I error is defined to be the probability of falsely rejecting the null hypothesis if it is actually true. We simulated this scenario by drawing samples under the null hypothesis, from the geometric distribution with parameter $p$ for various sample sizes $n$. If the conditional $p$-value came out lower than the significance level, we had made a type I error.

We used $1000$ iterations to calculate each conditional $p$-value and another $1000$ iterations to calculate the type I error.
\begin{table}[H]
	\begin{center}
		\resizebox{\columnwidth}{!}{%
			\begin{tabular}{| c | c | c | c | c | c | c | c | c | c |}
				\hline
				\multicolumn{10}{|c|}{$\alpha=0.05$}\\
				\hline 
				Parameter & Sample size & $W^2$ & $A^2$ & $KS$ & $CR$ & $SB_0$  & $|SW|$ & $SWL$ & $SWU$\\
				\hline
				\multirow{3}{*}{$p=0.25$} & $n=5$ & $0.048$ & $0.047$ & $0.035$ & $0.037$ & $0.035$ & $0.044$ & $0.037$ & $0.039$\\
				& $n=25$ & $0.055$ & $0.054$ & $0.047$ & $0.048$ & $0.052$ & $0.052$ & $0.049$ & $0.050$ \\
				& $n=100$ & $0.056$ & $0.056$ & $0.055$ & $0.054$ & $0.056$ & $0.057$ & $0.056$ & $0.053$ \\
				\hline
				\multirow{3}{*}{$p=0.5$} & $n=5$ & $0.027$ & $0.026$ & $0.018$ & $0.018$ & $0.017$ & $0.023$ & $0.017$ & $0.020$\\
				& $n=25$ & $0.066$ & $0.065$ & $0.056$ & $0.050$ & $0.048$ & $0.064$ & $0.051$ & $0.055$ \\
				& $n=100$ & $0.042$ & $0.045$ & $0.036$ & $0.050$ & $0.049$ & $0.039$ & $0.059$ & $0.050$ \\
				\hline
				\multirow{3}{*}{$p=0.75$} & $n=5$ & $0.002$ & $0.002$ & $0.000$ & $0.001$ & $0.001$ & $0.001$ & $0.001$ & $0.001$\\
				& $n=25$ & $0.032$ & $0.030$ & $0.027$ & $0.032$ & $0.027$ & $0.029$ & $0.031$ & $0.017$ \\
				& $n=100$ & $0.045$ & $0.045$ & $0.040$ & $0.046$ & $0.040$ & $0.046$ & $0.041$ & $0.043$ \\
				\hline
				\multicolumn{10}{|c|}{$\alpha=0.1$}\\
				\hline
					\multirow{3}{*}{$p=0.25$} & $n=5$ & $0.070$ & $0.066$ & $0.057$ & $0.080$ & $0.081$ & $0.073$ & $0.079$ & $0.073$\\
					& $n=25$ & $0.105$ & $0.113$ & $0.096$ & $0.111$ & $0.118$ & $0.103$ & $0.112$ & $0.106$ \\
					& $n=100$ & $0.101$ & $0.092$ & $0.087$ & $0.104$ & $0.101$ & $0.103$ & $0.097$ & $0.107$ \\
					\hline
					\multirow{3}{*}{$p=0.5$} & $n=5$ & $0.042$ & $0.040$ & $0.038$ & $0.035$ & $0.037$ & $0.039$ & $0.035$ & $0.038$\\
					& $n=25$ & $0.101$ & $0.108$ & $0.071$ & $0.095$ & $0.089$ & $0.110$ & $0.098$ & $0.107$ \\
					& $n=100$ & $0.123$ & $0.116$ & $0.117$ & $0.105$ & $0.104$ & $0.109$ & $0.105$ & $0.115$ \\
					\hline
					\multirow{3}{*}{$p=0.75$} & $n=5$ & $0.006$ & $0.006$ & $0.005$ & $0.002$ & $0.002$ & $0.006$ & $0.002$ & $0.006$\\
					& $n=25$ & $0.062$ & $0.060$ & $0.060$ & $0.059$ & $0.057$ & $0.070$ & $0.057$ & $0.032$ \\
					& $n=100$ & $0.101$ & $0.102$ & $0.090$ & $0.097$ & $0.095$ & $0.095$ & $0.098$ & $0.078$ \\
					\hline
			\end{tabular}
		}
	\end{center}
	\caption{Type I errors for various sample sizes, parameter $p$-values, test statistics and significance levels $\alpha$.}
\end{table}
Some of the type I errors are slightly above the significance level but this is explained by Monte Carlo errors from calculating the $p$-value and the error. This is because of the discreteness of the data. If the parameter is $p=0.75$, the samples consist largely of $0$-s and if the sample size is small, we often get only $0$-s. In that case $t=0$ and it is a singular case.

We left out $\hat\theta$ and $SB$ from the power study and type I error study because they had identical powers for all alternatives and $SB_0$ should be preferred over $SB$.

\section{Real Life Data}
In this section we use real life data from \cite{BCG}. The data consist of numbers of inspections between discovery of defects in an industrial process. Conditional samples are used to calculate the distribution of goodness-of-fit test statistics following the recipe from Section~\ref{MC}. Conditional $p$-values are reported to decide if we should reject or not reject the null hypothesis, that the data comes from the geometric distribution.

In order to have the data on the form considered in this paper, we have subtracted 1 from each observation. 

 \begin{table}[H]
\begin{center}
\begin{tabular}{| c | c | c | c | c | c | c | c | c | c | c | c | c | c | c |}
	\hline
	Value & $0$ & $1$ & $2$ & $3$ & $4$ & $\ge 5$ \\
	\hline
	Observed frequency & $6$ & $4$ & $3$ & $3$ & $2$ & $10$  \\
	Expected frequency, geometric & 3.9 & 3.3 & 2.9 & 2.5 & 2.1 & 13.3\\
	Expected freqquency, beta-geometric & 5.0 & 3.9 & 3.1 & 2.5 & 2.0 & 11.5\\
	Expected frequency, discrete Weibull & 6.0 & 3.6 & 2.7 & 2.2 & 1.8 & 11.7\\
	\hline
	\end{tabular} 
\end{center}
\caption{Real life data. Observed and estimated expected frequencies for three different models.}
      \label{realdata}
      \end{table}
      The data is given in Table~\ref{realdata}. Note that the data $x_i \ge 5$ are lumped together in the table for illustrative purposes. The observed values for these 10 observations are 6. 8, 10, 12, 13, 16, 17, 25, 28, and are used in the simulations and calculations. 
      
     Conditional $p$-values for the various tests are given in Table~\ref{realtab}, calculated with $10000$ Monte Carlo samples from the conditional distribution.
       \begin{table}[H]
      	\begin{center}
      		\begin{tabular}{| c | c | c | c | c | c |c|c|c|c|c|}
      			\hline
      			Statistic & $W^2$ & $A^2$ & $KS$ & $CR$ & $SB$ 
      			& $SB_0$ & $\hat \theta$ & $|SW|$ & $SWL$ & $SWU$ \\
      			\hline
      			$p^{\text{cond}}$ & $0.107$ & $0.117$ & $0.315$ & $0.042$ & $0.134$ & 0.134 & 0.134 &0.110 & 0.047 &0.953  \\
      			\hline
      		\end{tabular} 
      	\end{center}
      \caption{Conditional $p$-values obtained by simulating 10000 data sets from the conditional distribution.}
      \label{realtab}
      \end{table}
      
The standard tests as well as the tests versus beta-geometric distribution still indicate the possibility of a geometric distribution, having $p$-values $> 0.10$, while the hypothesis of geometric distribution is in fact rejected at 5\% significance level by the $CR$ test and the one-sided test versus the type I discrete Weibull distribution with $\beta < 1$.  This possibility of the Weibull distribution is also indicated by the fitted expected frequencies as shown in Table~\ref{realdata}. 

The VGLM R-package gives the maximum likelihood estimates for the beta-geometric model given by
\begin{eqnarray*}
\hat \pi &=& 0.1772 \\
\hat \theta &=&  0.0502 \\
\hat p &=& 0.1378 \mbox{ (geometric distribution)}
\end{eqnarray*}

Also, the VGLM R-package gives a $p$-value for a likelihood ratio test verus the beta-geometric  to be 0.3276. This is higher than the values for $SB$ and $SB_0$, e.g. The reason might be that the asymptotic chi-square distribution of the likelihood ratio is not appropriate for these data. 

The DiscreteWeibull R-package estimates a type I Weibull model giving
$\hat q =0.784, \hat \beta= 0.794$, indicating a decreasing hazard rate, which corresponds well to the above rejection of the one-sided test versus $\beta<1$.

\section{Conclusion and Future Work}
In this paper we studied goodness-of-fit tests for discrete distributions obtained by conditioning on the sufficient statistic under the null hypothesis. We developed in particular a method to draw conditional samples from the geometric distribution. These samples are used for calculation of $p$-values for
various goodness-of-fit tests. 
In addition to considering standard goodness-of-fit tests,  we derived new likelihood based test statistics for testing of the geometric distribution versus heterogeneity, as well as
versus discrete Weibull distributions with both increasing and decreasing hazard.

A power study was conducted to check how the tests perform against data from different alternative distributions. Our simulations suggested that the two-sided test versus the type I discrete Weibull distributions was able to detect bad fit for data from various alternative distributions. The power results for this test, $|SW|$, were in fact generally similar to the ones obtained for the standard quadratic goodness-of-fit tests.

The tests versus heterogeneous geometric distributions, $CR$ and $SB_0$, are doing well for alternatives of this kind, as one should expect, and then usually much better than the standard tests. The tests for heterogeneity are, however, mostly inferior versus other alternatives. The reason is presumably that heterogeneity leads to increased variance as compared to the geometric distribution. On the other hand, Weibull distributions with decreasing hazard lead to an increased variance.  

Real life data from \cite{BCG} was considered and it was tested whether the geometric
distribution is a good fit. The calculated $p$-values suggested that the geometric distribution might not be a good fit according to some of the tests.

For further work, a general method could be described for the case where $T(\bX) = \sum_{i=1}^n  X_i$ is sufficient for the family of distributions under the null hypothesis. This is the case for the power series distributions where the probability distribution is of the form, see
\cite{kyria} or \cite{gonzalez},
\begin{equation}
\label{powdist}
\mathbf{P}(X=x) = \frac{a(x)\theta^x}{\eta(\theta)} \mbox{ for } x=0,1,2,\ldots
\end{equation}
where $a(x) \ge 0$, $\theta>0$, $\eta(\theta)=\sum_{y=0}^\infty a(y)\theta^y$. 
The Poisson, binomial, negative binomial and geometric distributions are all of this kind. It can be shown from (\ref{powdist}) \citep{gonzalez} that for  samples $\bX=(X_1,X_2,\ldots,X_n)$ from this distribution, we have
\[
\mathbf{P}(\bX=\bx\; |\; T(\bX)=t) \propto \prod_{i=1}^n a(x_i) \mbox{ when } \sum_{i=1}^n x_i= t.
\] 
Conditional samples with a given sum $t$ can hence be obtained by the Metropolis-Hastings algorithm using samples from the conditional geometric distribution as proposals.

\newpage
\section*{Appendix}

	\noindent \textbf{Proof of Lemma~\ref{biject}}
\begin{proof}
	The function is defined correctly, because for any $(k_1,k_2,\ldots, k_{n-1})\in L_2$
	$$k_1-1 + (k_2-2)-(k_1-1) + \ldots +(k_{n-1}-(n-1))-(k_{n-2}-(n-2)) + t-(k_{n-1}-(n-1)) = t $$
	and $k_1-1, (k_2-2)-(k_1-1), \ldots ,t-(k_{n-1}-(n-1)) \in \mathbb{N}_0$, as $0<k_1<k_2<\ldots <k_{n-1}<t+n$. 
	\\
	Let us assume that $(k_1,k_2,\ldots ,k_{n-1}),(v_1,v_2,\ldots ,v_{n-1})\in L_2$, such that
	$$\phi(k_1,k_2,\ldots ,k_{n-1})=\phi(v_1,v_2,\ldots,v_{n-1}).$$
	This implies that
	\begin{align*}
	k_1+1=v_1+1&\Rightarrow k_1=v_1,\\
	(k_2-2)-(k_1-1)=(v_2-2)-(v_1-1)&\Rightarrow k_2=v_2,\\
	&\ldots\\
	(k_{n-1}-(n-1))-(k_{n-2}-(n-2))=(v_{n-1}-(n-1))-(v_{n-2}-(n-2))&\Rightarrow k_{n-1}=v_{n-1},\\
	\end{align*}
	and we can conclude that $\phi$ is injective.
	\\	
	Let us fix an element $(x_1,x_2,\ldots ,x_n)\in L_1$, then 
	$$\phi\left(x_1+1,x_1+x_2+2, \ldots , \sum_{i=1}^{n-1}x_i+(n-1)\right)=(x_1,x_2,\ldots ,x_n)$$
	and
	$$0<x_1+1<x_1+x_2+2<\ldots <\sum_{i=1}^{n-1}x_i+n-1<t+n,$$
	which implies that $\phi$ is surjective. Injectivity and surjectivity imply that $\phi$ is bijective.
	\end{proof}

\begin{lemma}
\label{correct}
	Let $n,t\in\mathbb{N}$ and $(k_1,\ldots ,k_{n-1})\in L_2$ be an arbitrary sample drawn according to algorithm \ref{norec}. Then it is drawn uniformly, i.e. 
	$$\mathbf{P}(k_1,\ldots ,k_{n-1})=\frac{1}{\binom{n+t-1}{n-1}}$$
	for each $(k_1,\ldots ,k_{n-1})\in L_2$. The probability follows from Lemma \ref{many}.
	\end{lemma}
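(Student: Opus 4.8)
\noindent\textbf{Proof proposal for Lemma~\ref{correct}.}

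The plan is to reduce Algorithm~\ref{norec} to the recursive routine it was obtained from and then argue by induction on the size of the ground set. First I would note that an element $(k_1,\ldots,k_{n-1})\in L_2$ is the same thing as an increasingly ordered $(n-1)$-subset of $\{1,2,\ldots,t+n-1\}$, since the constraint $k_1<k_2<\cdots<k_{n-1}<t+n$ forces $k_j\in\{1,\ldots,t+n-1\}$. Hence, by Lemma~\ref{biject} together with Lemma~\ref{many} (or by a direct count), $L_2$ has exactly $\binom{t+n-1}{n-1}$ members, which is precisely why the value $1/\binom{n+t-1}{n-1}$ asserted in the statement is the uniform one; it therefore suffices to show that the algorithm assigns equal probability to each such subset.

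Next I would observe that the only modification made to RandomKSubsets of \cite{Herb} was to unfold its recursive calls into an explicit loop, and that this change leaves the random output unaffected: the sequence of independent coin tosses consumed, and the deterministic map from those tosses to the returned element of $L_2$, are identical for the two implementations. So it is enough to prove correctness of the recursive version, which I would describe as scanning the candidate values $t+n-1, t+n-2, \ldots, 1$ in turn and, when $r$ further bars still have to be placed among the $m$ values not yet inspected, including the current value with probability $r/m$ and discarding it with probability $(m-r)/m$, halting once all $n-1$ bars have been placed (or once the remaining values must all be taken). The core claim, proved by induction on $m$, is that the sub-procedure invoked with parameters $(m,r)$ returns each $r$-subset of the $m$ values then under consideration with probability $1/\binom{m}{r}$: the cases $r=0$ and $r=m$ are trivial, and in the inductive step one fixes a target subset $S$, splits on whether the current value belongs to $S$, and multiplies the branch probability $r/m$ (respectively $(m-r)/m$) by the inductive value $1/\binom{m-1}{r-1}$ (respectively $1/\binom{m-1}{r}$); the identities $\frac{r}{m}\binom{m}{r}=\binom{m-1}{r-1}$ and $\frac{m-r}{m}\binom{m}{r}=\binom{m-1}{r}$ make each product collapse to $1/\binom{m}{r}$. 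Taking $m=t+n-1$ and $r=n-1$ yields the lemma.

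The telescoping algebra is routine; the step that needs genuine care is the bookkeeping for the iterative reformulation, namely verifying that the loop in Algorithm~\ref{norec} terminates with exactly $n-1$ bars chosen, that it inspects the values of $\{1,\ldots,t+n-1\}$ in the correct order, and that the ``slots remaining over candidates remaining'' weights are the ones it actually uses, so that the identification with the recursive scheme above is faithful. Should the implementation in fact rest on a different selection rule, the same strategy still works: one exhibits the invariant that, conditioned on the choices made so far, the remaining choices are uniform over all admissible completions, and then multiplies the conditional probabilities along a path. In every case the enumeration $\binom{n+t-1}{n-1}$ that supplies the normalising constant is the content of Lemma~\ref{many}, exactly as the statement indicates.
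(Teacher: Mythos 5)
Your proposal is correct, but it proves the lemma by a different route than the paper. You prove uniformity of the underlying recursive RandomKSubsets scheme by induction on the size $m$ of the ground set, splitting on whether the currently inspected value lies in the target subset and collapsing each branch with the identities $\tfrac{r}{m}\binom{m}{r}=\binom{m-1}{r-1}$ and $\tfrac{m-r}{m}\binom{m}{r}=\binom{m-1}{r}$, then specialize to $m=t+n-1$, $r=n-1$. The paper instead works directly with the iterative Algorithm~\ref{norec}: it fixes an output $(k_1,\ldots,k_{n-1})$, writes its probability as the product of the $V+n-1$ acceptance/rejection probabilities actually used along the run (where $V$ is the number of rejected positions before the last bar is placed), and shows this product telescopes to $(n-1)!\,t!/(t+n-1)!=1/\binom{t+n-1}{n-1}$ regardless of $V$. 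The trade-off: your induction is cleaner and avoids the explicit product bookkeeping (where the paper's displayed indices are in fact slightly garbled), but it carries the extra obligation of showing the iterative loop is a faithful unfolding of the recursion -- that the acceptance weight $(n-1-N)/(t+n-1-N-V)$ is exactly ``bars remaining over positions remaining,'' that positions are scanned in a fixed order from $t+n-1$ downward so the accepted values form an increasing tuple in $L_2$, and that early termination (once $N=n-1$) does not change the law, since the omitted steps would be rejections with probability one. You flag this bookkeeping rather than carrying it out in full, but the weights you state do match the pseudocode, so the identification is sound; with that paragraph made explicit your argument is a complete and valid alternative proof, with the paper's direct path-probability computation needing no such equivalence step.
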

\begin{proof}
The task is to distribute $t$ stars and $n-1$ bars randomly on the positions $1,2,\ldots,t+n-1$. We start from the right. Then the probability of placing a bar in position $t+n-1$ is $\frac{n-1}{t+n-1}$. Then we proceed conditionally to the left and multiply probabilities of placing bars or stars in order to calculate probabilities of a given configuration. 

More precisely, let $(k_1,k_2,\ldots ,k_{n-1})$ be an arbitrary sample drawn according to Algorithm \ref{norec}. We want to calculate
	\begin{equation}\label{p}\mathbf{P}(k_1,\ldots ,k_{n-1}).\end{equation}
	We know that the algorithm accepted $n-1$ integers (i.e., placements $k_j$ of the bars) in the process. Also, let $V$ denote the number of integers that were not accepted. In total the algorithm ran $V+n-1$ iterations. The probability \eqref{many} is a product of $V+n-1$  probabilities. Let us look at the denominator and nominator separately. In the denominator we have
	\begin{equation}\label{den}(t+n-1)\cdot(t+n-2)\cdot \ldots \cdot(t-V-1).\end{equation}
	In the numerator we have
	\begin{equation}\label{nom1}(n-1)\cdot(n-2)\cdot \ldots \cdot 2 \cdot 1=(n-1)!\end{equation}
	from the accepted integers. In the numerator there is also
	\begin{equation}\label{nom2}t\cdot(t-1)\cdot \ldots \cdot (t-V-1)\end{equation}
	from the integers that were not accepted. Combining \eqref{den}, \eqref{nom1} and \eqref{nom2} we get
	\begin{align*}
	\mathbf{P}(k_1,k_2,\ldots ,k_{n-1})&=\frac{(n-1)!t(t-1)\cdots (t-V-1)}{(t+n-1)\cdots (t-V-1)}\\
	&=\frac{(n-1)!t(t-1)\cdots (t-V-1)(t-V-2)\cdots 2\cdot 1}{(t+n-1)\cdots (t-V-1)(t-V-2)\cdots 2\cdot 1}\\
	&=\frac{(n-1)!\; t!}{(t+n-1)!}\\
	&=\frac{1}{\binom{t+n-1}{n-1}}.
	\end{align*}

	\end{proof}

\begin{algorithm}[H]
	\KwData{$t$ and $n$}
	\KwResult{$k_1,\ldots ,k_{n-1}$}
	initialization\;
	$N=0$ \tcp*{number of accepted integers} 
	$V=0$ \tcp*{number of not accepted integers} 
	$I=t+n-1$ \tcp*{integer to consider} 
	\While{$N<n-1$}{
		Draw $p \sim U[0,1]$\;
		\If{$p<(n-1-N)/(t+n-1-N-V)$}{
			$k_{n-1-N}=I$ \tcp*{integer $I$ was accepted} 
			$N=N+1$\;
			$I=I-1$\;
			\textbf{Continue} 
		}
		\If{$p\geq(n-1-N)/(t+n-1-N-V)$}{
			$V=V+1$ \tcp*{integer $I$ was not accepted}
			$I=I-1$\;
			\textbf{Continue} 
		}
	}
	\caption{Draw $k_1,\ldots ,k_{n-1}$ Uniformly}
	\label{norec}
\end{algorithm}
\newpage
\begin{algorithm}[H]
	\KwData{Integer $V$, Data set $x$, sufficient statistic $T(x)=t$ and test statistic $D$}
	\KwResult{$p^{\text{cond}}$}
	initialization\;
	$\text{count} = 0$\;
	\For{$1$ to $V$}{
		Draw $y \sim X\; |\; T(X)=t$ \;
		\If{$D(y) \ge D(x)$}{
			$\text{count} = \text{count}+1$\;
		}
		}
	$p^{\text{cond}}=\text{count}/V\;$
	\caption{Monte Carlo Conditional $p$-value}
	\label{condp}
\end{algorithm}

\vekk{ 
\subsection{Sample from the Conditional Binomial Distribution}
Let $\mathbf{X}=(X_1, X_2, \ldots ,X_n)$ be a vector of iid random variables, such that  $X_i \sim \text{B}(m_i,p)$ for all $i=1,2,\ldots ,n$ and $T(\mathbf{X})=\sum_{i=1}^nX_i=t$ is a sufficient statistic for $p$. Conditional distribution is calculated as 
\begin{align*}
\mathbf{P}(\mathbf{X}=\mathbf{x}\; |\; T(\mathbf{X})=t)&= \mathbf{P}(X_1=x_1, X_2=x_2, \ldots ,X_n=x_n\; | \; T(X_1,X_2,\ldots,X_n)=t)\\
&=\frac{\mathbf{P}(X_1=x_1,X_2=x_2,\ldots ,X_n=x_n, \sum_{i=1}^nX_i=t)}{\mathbf{P}(\sum_{i=1}^nX_i=t)}\\
&=\left\{\begin{array}{lr}\displaystyle{\frac{\mathbf{P}(X_1=x_1,X_2=x_2,\ldots ,X_n=x_n)}{\mathbf{P}(\sum_{i=1}^nX_i=t)}}, & \text{ if } \sum_{i=1}^nX_i=t\\ 0, & \text{ if } \sum_{i=1}^nX_i\neq t\end{array}\right. .
\end{align*}
Now, we restrict the support to be $\{ (x_1,x_2,\ldots ,x_n)\; \colon \; \sum_{i=1}^nx_i=t, \ x_1,x_2,\ldots ,x_n \in \mathbb{N}_0\}$. Then
\begin{align*}
\mathbf{P}(\mathbf{X}=\mathbf{x}\; | \; T(\mathbf{X})=t)&=\frac{\mathbf{P}(X_1=x_1,X_2=x_2,\ldots ,X_n=x_n)}{\mathbf{P}(\sum_{i=1}^nX_i=t)}\\
&=\frac{\binom{m_1}{x_1}p^{x_1}(1-p)^{m_1-x_1} \binom{m_2}{x_2}p^{x_2}(1-p)^{m_2-x_2}\cdots \binom{m_n}{x_n}p^{x_n}(1-p)^{m_n-x_n}}{\binom{m_1+m_2+\ldots +m_n}{t}p^t(1-p)^{m_1+m_2+\ldots +m_n-t}}\\
&= \frac{\binom{m_1}{x_1}\binom{m_2}{x_2}\ldots \binom{m_n}{x_n}}{\binom{m_1+m_2+\ldots +m_n}{t}}.
\end{align*}
Which means that the conditional distribution has the multivariate hypergeometric distribution. \cite{randomnumber} provides a method for drawing realizations from it.

\subsection{Conditional Poisson Distribution}

Let $\mathbf{X}=(X_1, X_2, \ldots ,X_n)$ be a vector of iid random variables, such that  $X_i \sim \text{Pois}(a_i\lambda)$ for all $i=1,2,\ldots ,n$ and $a_1,a_2,\ldots ,a_n>0$ are known. Function $T(\mathbf{X})=\sum_{i=1}^nX_i=t$ is a sufficient statistic for $\lambda$. Sufficiency follows from the joint probability factorization
$$\mathbf{P}(\mathbf{X}=\mathbf{x})=\prod_{i=1}^n \frac{a_i^{x_i} \lambda^{x_i}e^{-a_i\lambda}}{x_i!}=e^{-\sum_{i=1}^na_i\lambda}\lambda^{\sum_{i=1}^nx_i}\prod_{i=1}^n \frac{a_i^{x_i} }{x_i!}.$$
Just as in the previous case, we restrict the support to be $\{ (x_1,x_2,\ldots ,x_n)\; \colon \; \sum_{i=1}^nx_i=t, \ x_1,x_2,\ldots ,x_n\in \mathbb{N}_0 \}$. Then
\begin{align}
\mathbf{P}(\mathbf{X}=\mathbf{x} \; |\; T(\mathbf{X})=t)&= \mathbf{P}(X_1=x_1, X_2=x_2, \ldots ,X_n=x_n\; | \; T(X_1,X_2,\ldots,X_n)=t)\nonumber\\
&=\frac{\prod_{j=1}^n \frac{(a_j\lambda)^{x_j}e^{-a_j\lambda}}{x_j!}}{\mathbf{P}\left( \sum_{i=1}^nX_i=t\right)}\nonumber\\
&=\frac{\prod_{j=1}^n \frac{(a_j\lambda)^{x_j}e^{-a_j\lambda}}{x_j!}}{\frac{e^{-\lambda\sum_{i=1}^na_i}\left(\sum_{i=1}^na_i\lambda\right)^t}{t!}}\label{condpois}\\
&=\frac{t!\lambda^{\sum_{j=1}^nx_j} }{(\sum_{i=1}^na_i\lambda)^t \prod_{i=1}^n x_i!} \prod_{i=1}^n a_i^{x_i}\nonumber \\
&=\frac{t!}{ \prod_{i=1}^n x_i!} \prod_{i=1}^n \left(\frac{a_i}{\sum_{i=1}^na_i}\right)^{x_i} \label{mult}
\end{align}
and \eqref{condpois} comes from $\sum_{i=1}^nX_i\sim \text{Pois}\left(\lambda\sum_{i=1}^na_i\right)$. Equation \eqref{mult} shows that the sample is multinomially distributed, i.e.
$$\mathbf{X}\; | \; T(\mathbf{X})=t \sim \text{mult}\left(t, \frac{a_1}{\sum_{i=1}^na_i},\frac{a_2}{\sum_{i=1}^na_i},\ldots , \frac{a_n}{\sum_{i=1}^na_i}\right).$$
\cite{randomnumber} provides a method to generate a sample from the multinomial distribution.
} 

\subsection*{Conditional Sampling from the Negative Binomial Distribution}

Let $Y_1\sim \text{NB}(r_1,p), Y_2\sim \text{NB}(r_2,p),\ldots,Y_n\sim \text{NB}(r_n,p) $ be independent random variables, where the parameters $r_1,\ldots ,r_n$ are assumed to be known. Then $T(\mathbf{Y})=\sum_{i=1}^nY_i$ is a sufficient statistic for $p$. In this subsection, we will show how the algorithm for the geometric distribution in Section~\ref{MCsub} can be used to draw samples from the conditional distribution $\mathbf{Y}\; | \; T(\mathbf{Y})=t$.

Note first that an argument like the one leading to (\ref{condprob}) gives the following expression:
\[
  \mathbf{P}(Y_1=y_1,\ldots,Y_n=y_n|\sum_{i=1}^n Y_i = t) = \frac{\prod_{i=1}^n \binom{y_i-r_i-1}{y_i}}{\binom{t+R-1}{t}}
\]
where $R=\sum_{i=1}^n r_i$. 

The following shows that we can sample from this conditional distribution by using the algorithm for the geometric distriburion. Note first that we can write for $i=1,\ldots,n$, 
\[
Y_i = \sum_{j=1}^{r_i} X_{ij}
\]
where $X_{ij}, \; i=1,\ldots,n; j=1,\ldots,r_i$ are i.i.d. from
 $\text{Geom}(p)$. 
Then 
	\begin{align}
	\mathbf{P}(\mathbf{Y}=\mathbf{y}\; |\; T(\mathbf{Y})=t)&= \mathbf{P}(Y_1=y_1, Y_2=y_2, \ldots ,Y_n=y_n\; | \; T(Y_1,Y_2,\ldots,Y_n)=t)\nonumber\\
	&= \mathbf{P}\left(\sum_{j=1}^{r_1}X_{1j}=y_1, \ldots ,\sum_{j=1}^{r_n}X_{nj}=y_n\; \Big| \; \sum_{i=1}^n\sum_{j=1}^{r_i}X_{ij}=t\right)\nonumber \\
	&= \displaystyle \mathop{\mathop{\sum_{(x_{ij}):\sum_j x_{1j}=y_1,}}_{,\ldots,}}_{,\sum_j x_{nj}=y_n}  \mathbf{P}\left( X_{ij}=x_{ij} \colon i=1,\ldots ,n, \ j=1,\ldots ,r_i \; \Big| \; \sum_{i,j}X_{ij}=t\right).\nonumber
	\end{align}
It follows from this that we can use the method for drawing samples from the conditional geometric distribution to draw condtional samples in the negative binomial case. More precisely, we can first draw a sample $x_1,x_2,\ldots x_R$, where $R=r_1+r_2+\ldots+r_n$, from the conditional distribution of $X_1,X_2,\ldots,X_{R}\; | \; T(\mathbf{X})=t,$
where $X_i\sim \text{Geom}(p)$ for $i=1,2,\ldots ,R$ and let 
\vekk{ 
$$y_1=\sum_{i=1}^{r_1}x_i, \ y_2=\sum_{i=1}^{r_2}x_{r_1+i}, \ldots ,y_n=\sum_{i=1}^{r_n}x_{r_1+\ldots+r_{n-1}+i}.$$ 
} 
$$y_1=\sum_{i=1}^{r_1}x_i, \ y_2=\sum_{i=r_1+1}^{r_1+r_2}x_{i}, \ldots ,y_n=\sum_{i=r_1+\ldots+r_{n-1}+1}^{R}x_{i}.$$ 
 We end up with a sample $\mathbf{y}$ from the desired conditional distribution $\mathbf{Y}\; | \; T(\mathbf{Y})=t$.
 
For simulation in practice, notice that Algorithm~\ref{norec} with input $t$ and $R$ gives numbers $k_1,k_2,\ldots,k_{R-1}$.  Using the transformation $\phi$ in Lemma~\ref{biject}, it is then seen that we have
\begin{eqnarray*}
y_1 &=& k_{r_1}-r_1 \\
y_2 &=& k_{r_1+r_2} - k_{r_1} - r_2 \\
&\vdots& \\
y_i &=& k_{r_1+\ldots+ r_{i}} - k_{r_1+\ldots+ r_{i-1}} - r_{i} \\
&\vdots& \\ 
y_{n-1} &=& k_{r_1+\ldots+ r_{n-1}} - k_{r_1+\ldots+ r_{n-2}} - r_{n-1}  \\ 
y_n &=& t - k_{r_1+\ldots+ r_{n-1}} + R - r_n .
\end{eqnarray*}

\subsection*{Some discrete distributions}
\begin{table}[H]
\begin{center}
	\begin{tabular}{| c | c |}
		\hline
		$X\sim \text{NB}(r,p)$ & $\mathbf{P}(X=x)=\binom{x+r-1}{x}(1-p)^xp^r, \ x=0,1,2,\ldots$\\
		\hline
		$X\sim \text{Pois}(\lambda)$ & $\mathbf{P}(X=x)=\frac{\lambda^xe^{-\lambda}}{x!}, \ x=0,1,2,\ldots$\\
		\hline
		$X\sim\text{Geom}(p)$ & $\mathbf{P}(X=x)=p(1-p)^x, \ x=0,1,2,\ldots$\\
		\hline
		$X\sim\text{Bin}(n,p)$ & $\mathbf{P}(X=x)=\binom{n}{x}p^x(1-p)^{n-x}, \ x=0,1,2,\ldots,n$\\
		\hline
		$X\sim\text{Ber}(p)$ & $\mathbf{P}(X=x)=p^x(1-p)^{1-x}, \ x=0,1$\\
		\hline
		$X\sim\mathcal{W}(q, \beta)$ & $\mathbf{P}(X=x)=q^{x^\beta}-q^{(x+1)^\beta}, \ x=0,1,2,\ldots$\\
		\hline
		$\mathbf{X}\sim\text{Mult}(t, \pi_1,\pi_2,\ldots ,\pi_n)$ & $\mathbf{P}(\mathbf{X}=\mathbf{x})=\frac{n!\pi_1^{x_1}\pi_2^{x_2}\cdots \pi_n^{x_n}}{x_1!x_2!\cdots x_n!}, \ \sum_{i=1}^nx_i=t$\\
		\hline
		$X\sim \mathrm{BG}(\alpha,\beta)$ & $\mathbf{P}(X=x)= \frac{B(\alpha+1, x+\beta)}{B(\alpha, \beta)}, \ x=0,1,2,\ldots$\\
		\hline
	\end{tabular}
\end{center}
\caption{Distributions}
\end{table}

\bibliography{gofbib}

\bibliographystyle{chicago}

 \end{document}